\numberwithin{equation}{section}
\theoremstyle{plain}
\newtheorem{thm}{Theorem}[section]
\newtheorem{lem}{Lemma}[section]
\theoremstyle{remark}
\newtheorem{rem}{Remark}[section]
\DeclareMathOperator{\td}{d}
\begin{document}

\title[Approximation formulas and inequalities for Wallis ratio]
{Some best approximation formulas and inequalities for Wallis ratio}

\author[F. Qi]{Feng Qi}
\address[Qi]{College of Mathematics, Inner Mongolia University for Nationalities, Tongliao City, Inner Mongolia Autonomous Region, 028043, China; Institute of Mathematics, Henan Polytechnic University, Jiaozuo City, Henan Province, 454010, China}
\email{\href{mailto: F. Qi <qifeng618@gmail.com>}{qifeng618@gmail.com}, \href{mailto: F. Qi <qifeng618@hotmail.com>}{qifeng618@hotmail.com}, \href{mailto: F. Qi <qifeng618@qq.com>}{qifeng618@qq.com}}
\urladdr{\url{http://qifeng618.wordpress.com}}

\author[C. Mortici]{Cristinel Mortici}
\address[Mortici]{Department of Mathematics, Valahia University of T\^argovi\c{s}te, Bd. Unirii 18, 130082 T\^argovi\c{s}te, Romania}
\email{\href{mailto: C. Mortici <cmortici@valahia.ro>}{cmortici@valahia.ro}}
\urladdr{\url{http://www.cristinelmortici.ro}}

\begin{abstract}
In the paper, the authors establish some best approximation formulas and inequalities for Wallis ratio. These formulas and inequalities improve an approximation formula and a double inequality for Wallis ratio recently presented in ``S. Guo, J.-G. Xu, and F. Qi, \textit{Some exact constants for the approximation of the quantity in the Wallis' formula}, J. Inequal. Appl. 2013, \textbf{2013}:67, 7~pages; Available online at \url{http://dx.doi.org/10.1186/1029-242X-2013-67}''.
\end{abstract}

\keywords{Wallis ratio; best approximation formula; double inequality; asymptotic series}

\subjclass[2010]{05A10, 11B65, 33B15, 41A10, 42A16}

\thanks{This paper was typeset using \AmS-\LaTeX}

\maketitle

\section{Introduction}

Wallis ratio is defined as
\begin{equation*}
W_{n}=\frac{(2n-1)!!}{(2n)!!}=\frac{1}{\sqrt{\pi}\,}
\frac{\Gamma\bigl(n+\frac{1}{2}\bigr)}{\Gamma(n+1)},
\end{equation*}
where $\Gamma$ is the classical Euler gamma function which may be defined by
\begin{equation}\label{gamma-dfn}
\Gamma(z)=\int^\infty_0u^{z-1} e^{-u}\td u, \quad \Re(z)>0.
\end{equation}
The study and applications of $W_{n}$ have a long history, a large amount of literature, and a lot of new results. For detailed information, please refer to the papers~\cite{chenwallis, notes-best-simple-open-jkms.tex, mia-qi-cui-xu-99, Bukac-Sevli-Gamma.tex}, related texts in the survey articles~\cite{bounds-two-gammas.tex, Wendel2Elezovic.tex-JIA, Wendel-Gautschi-type-ineq-Banach.tex} and references cited therein. Recently, Guo, Xu, and Qi proved in~\cite{g} that the double inequality
\begin{equation}\label{u}
\sqrt{\frac{e}{\pi}}\,\biggl(1-\frac{1}{2n}\biggr)^{n}\frac{\sqrt{n-1}\,}{n}
<W_{n}\le \frac{4}{3}\biggl(1-\frac{1}{2n}\biggr)^{n}\frac{\sqrt{n-1}\,}{n}
\end{equation}
for $n\ge2$ is valid and sharp in the sense that the constants $\sqrt{\frac{e}{\pi}}\,$ and $\frac{4}{3}$ in~\eqref{u} are best possible. They also proposed in~\cite{g} the approximation formula
\begin{equation}\label{w}
W_{n}\sim \chi _{n}:=\sqrt{\frac{e}{\pi}}\,\biggl(1-\frac{1}{2n}\biggr)^{n}
\frac{\sqrt{n-1}\,}{n},\quad n\to\infty.
\end{equation}
\par
The sharpness of the double inequality~\eqref{u} was proved in~\cite{g} basing on the variation of a function which decreases on $[2,\infty)$ from $\frac43$ to $\sqrt{\frac{e}\pi}\,$. As a consequence, the right-hand side of~\eqref{u} becomes weak for large values of $n$. Moreover, if we are interested to estimating $W_{n}$ when $n$ approaches infinity, then the constant $\sqrt{\frac{e}\pi}\,$ should be chosen and inequalities using $\sqrt{\frac{e}\pi}\,$ are welcome.
\par
The aim of this paper is to improve the double inequality~\eqref{u} and the approximation formula~\eqref{w}.

\section{A lemma}

For improving the double inequality~\eqref{u} and the approximation formula~\eqref{w}, we need the following lemma.

\begin{lem}[{\cite[Lemma~1.1]{m1}}]\label{m1-Lemma1.1}
If the sequence $\{\omega_n:n\in\mathbb{N}\}$ converges to $0$ and
\begin{equation}
\lim_{n\to\infty}n^k(\omega_n-\omega_{n+1})=\ell\in\mathbb{R}
\end{equation}
for $k>1$, then
\begin{equation}
\lim_{n\to\infty}n^{k-1}\omega_n=\frac{\ell}{k-1}.
\end{equation}
\end{lem}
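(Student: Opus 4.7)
The plan is to recognize this as a Stolz--Ces\`aro type statement and reduce it to the indeterminate form $0/0$ version of that classical theorem. Set $a_n=\omega_n$ and $b_n=\frac{1}{n^{k-1}}$. Since $k>1$, the sequence $b_n$ is strictly decreasing and tends to $0$, and by hypothesis $a_n\to 0$ as well, so the Stolz--Ces\`aro theorem for $0/0$ applies: if $\lim_{n\to\infty}\frac{a_n-a_{n+1}}{b_n-b_{n+1}}$ exists, then $\lim_{n\to\infty}\frac{a_n}{b_n}$ equals the same value, which is exactly the conclusion $\lim_{n\to\infty} n^{k-1}\omega_n=\ell/(k-1)$.

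The substantive step, then, is to evaluate $\lim_{n\to\infty}\frac{\omega_n-\omega_{n+1}}{b_n-b_{n+1}}$. For this I would compute the asymptotics of the denominator by a Taylor (or binomial) expansion:
\begin{equation*}
\frac{1}{n^{k-1}}-\frac{1}{(n+1)^{k-1}}=\frac{1}{n^{k-1}}\left[1-\left(1+\frac{1}{n}\right)^{-(k-1)}\right]=\frac{k-1}{n^{k}}+O\!\left(\frac{1}{n^{k+1}}\right),
\end{equation*}
so $b_n-b_{n+1}\sim\frac{k-1}{n^k}$. Combined with the hypothesis $n^k(\omega_n-\omega_{n+1})\to\ell$, one gets
\begin{equation*}
\frac{\omega_n-\omega_{n+1}}{b_n-b_{n+1}}=\frac{n^k(\omega_n-\omega_{n+1})}{n^k(b_n-b_{n+1})}\longrightarrow\frac{\ell}{k-1},
\end{equation*}
which plugged into Stolz--Ces\`aro finishes the proof.

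The only mild obstacle is justifying the expansion of $(1+1/n)^{-(k-1)}$ uniformly enough to extract the leading term, but for a fixed real exponent $k-1$ this is routine from the generalized binomial series (or an application of the mean value theorem to the function $x\mapsto x^{-(k-1)}$ on $[n,n+1]$, giving $b_n-b_{n+1}=(k-1)\xi_n^{-k}$ with $\xi_n\in(n,n+1)$, hence $\xi_n^{-k}\sim n^{-k}$). A self-contained alternative, avoiding any appeal to Stolz--Ces\`aro, is to telescope: since $\omega_n\to 0$, $\omega_n=\sum_{j=n}^{\infty}(\omega_j-\omega_{j+1})$, estimate the tail using $(\omega_j-\omega_{j+1})\sim\ell/j^k$, and compare with the convergent series $\sum 1/j^k$ via an integral test to obtain $\omega_n\sim\frac{\ell}{(k-1)n^{k-1}}$; this is essentially the same computation organized differently.
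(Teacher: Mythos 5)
Your proof is correct: with $b_n=n^{-(k-1)}$ strictly decreasing to $0$ (since $k>1$), the $0/0$ form of the Stolz--Ces\`aro theorem together with the mean-value estimate $b_n-b_{n+1}=(k-1)\xi_n^{-k}\sim(k-1)n^{-k}$ gives exactly $\lim_{n\to\infty}n^{k-1}\omega_n=\ell/(k-1)$. The paper itself offers no proof, quoting the lemma from Mortici's earlier work, and the argument there is precisely this Stolz--Ces\`aro reduction, so your approach coincides with the standard one (your telescoping alternative is also sound, though for $\ell=0$ one should phrase the tail estimate with $\varepsilon$-bounds rather than asymptotic equivalence).
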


\begin{rem}
Lemma~\ref{m1-Lemma1.1} was first established in~\cite{m2} and has been effectively applied in many papers such as~\cite{Chen-Srivatava-AML-12, Furdui-ITSF-13, Laforgia-Natalini-JMAA-12, Lin-Long-JIA-13, Lu-Wang-JMAA-13, M-A-Agarwal-JIA-12, Mansour-Obaid-Ars-12, Mortici-NA-11, Mortici-CAM-10, Mortici-AMC-10, Mortici-AA-10}.
\end{rem}

\section{A best approximation formula}

With the help of Lemma~\ref{m1-Lemma1.1}, we first provide a best approximation formula of Wallis ratio $W_n$.

\begin{thm}\label{best-approx-thm1}
The approximation formula
\begin{equation}\label{w1}
W_{n}\sim \sqrt{\frac{e}{\pi}}\,\biggl(1-\frac{1}{2n}\biggr)^{n}\frac{1}{
\sqrt{n}\,},\quad n\to\infty
\end{equation}
is the best approximation of the form
\begin{equation}\label{a}
W_{n}\sim \sqrt{\frac{e}{\pi}}\,\biggl(1-\frac{1}{2n}\biggr)^{n}\frac{\sqrt{
n+a}}{n},\quad n\to\infty ,
\end{equation}
where $a$ is a real parameter.
\end{thm}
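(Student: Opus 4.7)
The plan is to quantify, for each real parameter $a$, the speed at which the relative error in~\eqref{a} decays, and to single out the unique value of $a$ that maximises this speed. For each $a\in\mathbb{R}$, set
\begin{equation*}
\omega_n(a):=\ln W_n-\frac{1}{2}\ln\frac{e}{\pi}-n\ln\!\biggl(1-\frac{1}{2n}\biggr)-\frac{1}{2}\ln(n+a)+\ln n,
\end{equation*}
so that~\eqref{a} amounts to the assertion $\omega_n(a)\to 0$. Lemma~\ref{m1-Lemma1.1} is the mechanism that converts a decay rate for the telescoping increment $\omega_n(a)-\omega_{n+1}(a)$ into a decay rate for $\omega_n(a)$ itself, so the problem reduces to extracting the leading asymptotic behaviour of this increment as a function of $a$.

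First I would compute an asymptotic expansion of $\omega_n(a)-\omega_{n+1}(a)$ in powers of $1/n$. The ingredients are elementary: the recurrence $W_{n+1}/W_{n}=(2n+1)/(2n+2)$ gives $\ln W_n-\ln W_{n+1}=\ln\frac{2n+2}{2n+1}$, and the remainder is handled by the Taylor series for $\ln(1+t)$ together with expansions of $n\ln(1-\tfrac{1}{2n})$, of $(n+1)\ln(1-\tfrac{1}{2(n+1)})$, and of the purely $a$-dependent piece $\tfrac{1}{2}[\ln(n+1+a)-\ln(n+a)]$. Careful bookkeeping should yield an expansion of the form
\begin{equation*}
\omega_n(a)-\omega_{n+1}(a)=\frac{A(a)}{n^{2}}+\frac{B(a)}{n^{3}}+O\!\left(\frac{1}{n^{4}}\right),
\end{equation*}
in which $A(a)$ is an affine function of $a$ vanishing only at $a=0$.

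Once this expansion is in hand, the theorem follows from the lemma. For $a\neq 0$ we have $A(a)\neq 0$, and Lemma~\ref{m1-Lemma1.1} applied with $k=2$ yields $n\,\omega_n(a)\to A(a)\neq 0$, so the corresponding approximation in~\eqref{a} has relative error of exact order $1/n$. For $a=0$ the leading coefficient vanishes and the lemma has to be applied one step deeper: from $n^{3}[\omega_n(0)-\omega_{n+1}(0)]\to B(0)$ one gets $n^{2}\omega_n(0)\to B(0)/2$, giving error of order $1/n^{2}$. Since $1/n^{2}$ is strictly faster than $1/n$, the choice $a=0$, which produces~\eqref{w1}, is the unique best approximation within the family~\eqref{a}.

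The main obstacle I foresee is purely computational: the expansion of $\omega_n(a)-\omega_{n+1}(a)$ must be pushed to sufficiently many orders in $1/n$ to isolate both the coefficient $A(a)$ with its full $a$-dependence and, at $a=0$, the next coefficient $B(0)$ governing the sharp rate. Beyond that expansion, the argument is a direct two-case application of Lemma~\ref{m1-Lemma1.1}.
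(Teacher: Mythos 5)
Your proposal is correct and follows essentially the same route as the paper: your $\omega_n(a)$ is exactly the paper's $z_n(a)$, and the argument likewise expands $\omega_n(a)-\omega_{n+1}(a)$ in powers of $1/n$ (the paper finds the leading coefficient $-a/2$ and next term $\frac{a}{2}+\frac{a^2}{2}+\frac{1}{12}$) and then invokes Lemma~\ref{m1-Lemma1.1} to conclude that $a=0$ gives the fastest convergence. The only remaining work in your plan is the routine bookkeeping you already identified, which matches what the paper carries out.
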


\begin{proof}
Define $z_n(a)$ by
\begin{equation*}
W_{n}=\sqrt{\frac{e}{\pi}}\,\biggl(1-\frac{1}{2n}\biggr)^{n}\frac{\sqrt{n+a}\,}{n}\exp z_{n}(a), \quad n\ge 1.
\end{equation*}
It is not difficult to see that $z_{n}(a)\to0$ as $n\to\infty$, A direct computation gives
\begin{equation*}
z_{n}(a)-z_{n+1}(a)=-\frac{a}{2n^{2}}+\biggl(\frac{1}{2}a+\frac{1}{2}a^{2}+\frac{1}{12}\biggr) \frac{1}{n^{3}} +O\biggl(\frac{1}{n^{4}}\biggr)
\end{equation*}
and
\begin{equation*}
\lim_{n\to\infty}\bigl\{n^2[z_{n}(a)-z_{n+1}(a)]\bigr\}=-\frac{a}2.
\end{equation*}
Making use of Lemma~\ref{m1-Lemma1.1}, we immediately see that the sequence $\{z_{n}(a):n\in\mathbb{N}\}$ converges fastest only when $a=0$. The proof of Theorem~\ref{best-approx-thm1} is complete.
\end{proof}

\begin{rem}
The approximation formula~\eqref{w1} is an improvement of~\eqref{w}, since the approximation formula~\eqref{w} is the special case $a=-1$ in~\eqref{a}.
\end{rem}

\section{An asymptotic series associated to~\eqref{w1}}

In this section, by discovering an asymptotic series and a single-sided inequality for Wallis ratio, we further generalize the approximation formula~\eqref{w1} and improve the left-hand side of the double inequality~\eqref{u}.

\begin{thm}\label{series-approxim-thm}
As $n\to\infty$, we have
\begin{equation*}
W_{n}\sim \sqrt{\frac{e}{\pi}}\,\biggl(1-\frac{1}{2n}\biggr)^{n}\frac{1}{\sqrt{n}\,}
\exp\biggl(\frac{1}{24n^{2}}+\frac{1}{48n^{3}}+\frac{1}{160n^{4}}+\frac{1}{960n^{5}}+\dotsm\biggr).
\end{equation*}
\end{thm}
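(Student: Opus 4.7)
The plan is to work with the logarithm of $W_n$ divided by the conjectured leading factor. Set
\[
\zeta_n := \log W_n - \frac{1}{2}\log\frac{e}{\pi} - n\log\!\biggl(1-\frac{1}{2n}\biggr) + \frac{1}{2}\log n,
\]
so that the statement of Theorem~\ref{series-approxim-thm} is equivalent to the asymptotic expansion
\[
\zeta_n \sim \frac{1}{24n^{2}}+\frac{1}{48n^{3}}+\frac{1}{160n^{4}}+\frac{1}{960n^{5}}+\dotsm,\qquad n\to\infty.
\]

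The crucial observation is that the telescoped difference $\zeta_n-\zeta_{n+1}$ is entirely elementary. Indeed, the functional equation of $\Gamma$ gives
\[
\log\Gamma\!\biggl(n+\frac{1}{2}\biggr)-\log\Gamma\!\biggl(n+\frac{3}{2}\biggr) = -\log\!\biggl(n+\frac{1}{2}\biggr),\qquad \log\Gamma(n+1)-\log\Gamma(n+2) = -\log(n+1),
\]
so that $\log W_n - \log W_{n+1} = \log(n+1)-\log(n+\frac{1}{2})$. Consequently $\zeta_n-\zeta_{n+1}$ is a finite combination of logarithms and linear terms in $n$ and $n+1$, each of which admits an explicit Taylor expansion in $1/n$.

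I would then proceed by induction on $m\ge 2$, mirroring the strategy used in the proof of Theorem~\ref{best-approx-thm1}. Provisionally set $c_2=\frac{1}{24}$, $c_3=\frac{1}{48}$, $c_4=\frac{1}{160}$, $c_5=\frac{1}{960}$, and in general whatever values are produced by the following recipe. Write $\zeta_n = \sum_{k=2}^{m}c_k n^{-k}+r_n^{(m)}$; expanding $r_n^{(m)}-r_{n+1}^{(m)}$ in powers of $1/n$ and choosing $c_{m+1}$ so that the leading term matches the computed coefficient, one finds that $n^{m+1}\bigl(r_n^{(m)}-r_{n+1}^{(m)}\bigr)$ tends to a finite limit $\ell_m\in\mathbb{R}$. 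Lemma~\ref{m1-Lemma1.1} applied with $k=m+1$ then yields $n^{m}r_n^{(m)}\to\ell_m/m$, so in particular $r_n^{(m)}=O(n^{-m})$. Iterating on $m$ produces the full asymptotic series in the statement.

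The main obstacle is the computational bookkeeping: to verify that the first four coefficients produced in this way are indeed $\frac{1}{24}$, $\frac{1}{48}$, $\frac{1}{160}$, $\frac{1}{960}$, one must expand the three elementary pieces $\log(n+1)-\log(n+\frac{1}{2})$, $n\log(1-\frac{1}{2n})-(n+1)\log(1-\frac{1}{2n+2})$, and $\frac{1}{2}\log\frac{n}{n+1}$ in powers of $1/n$ to sufficiently high order, subtract the known contributions $c_2,\dots,c_m$, and read off the leading remainder. The computation is purely algebraic and of the same nature as that already performed in the proof of Theorem~\ref{best-approx-thm1}; once organised with care, the stated coefficients fall out automatically and Lemma~\ref{m1-Lemma1.1} closes the induction.
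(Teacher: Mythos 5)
Your argument is correct in outline and rests on exactly the computation the paper uses---the expansion of the telescoped error $\zeta_n-\zeta_{n+1}=\ln\frac{W_n\,g(n+1)}{g(n)\,W_{n+1}}$ in powers of $1/n$, with $g(n)=\sqrt{e/\pi}\,\bigl(1-\frac{1}{2n}\bigr)^{n}n^{-1/2}$---but you close it by a different device: the paper quotes the ``associated asymptotic series'' theorem of \cite{m2}, writes this telescoped quantity in the closed form $\sum_{k\ge2}(-1)^{k}\bigl[\frac{1+(-1)^{k}}{(k+1)2^{k+1}}-\frac{1}{k+1}+\frac{1}{2k}\bigr]n^{-k}$, and reads the coefficients off the triangular system~\eqref{s}, whereas you re-derive the expansion term by term by iterating Lemma~\ref{m1-Lemma1.1}. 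The two are mathematically equivalent (the triangular system is precisely what your induction generates), so your route is more self-contained, at the cost of redoing by hand the bookkeeping that \cite{m2} packages once and for all. Two small repairs are needed: you should record why $\zeta_n\to0$ (Stirling, or the case $a=0$ in the proof of Theorem~\ref{best-approx-thm1}) before the first application of the lemma; and your indices are off by one---with $c_2,\dots,c_m$ chosen correctly, the expansion of $r_n^{(m)}-r_{n+1}^{(m)}$ begins at order $n^{-(m+2)}$, so you should apply the lemma with $k=m+2$, obtaining $n^{m+1}r_n^{(m)}\to c_{m+1}$, which simultaneously identifies the next coefficient and yields the bound $O(n^{-(m+1)})$ that the definition of an asymptotic expansion requires (your stated conclusion $r_n^{(m)}=O(n^{-m})$ is by itself too weak, although running your step once more at level $m+1$ recovers the needed estimate; alternatively, with $k=m+1$ the limit $\ell_m$ is $0$ and one gets $r_n^{(m)}=o(n^{-m})$ directly). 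Like the paper, you defer the decisive numerical check that the first coefficients are $\frac{1}{24}$, $\frac{1}{48}$, $\frac{1}{160}$, $\frac{1}{960}$ to an elementary expansion, so in that respect your level of detail matches the original proof.
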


\begin{proof}
Recall from~\cite{m2} that, to an approximation formula $f(n) \sim g(n) $, the following asymptotic
series is associated
\begin{equation*}
f(n) \sim g(n) \exp\Biggl(\sum_{k=1}^{\infty }\frac{a_{k}}{n^{k}}\Biggr) ,
\end{equation*}
where $a_{k}$ for $k\ge 2$ is a solution of the following infinite triangular system
\begin{equation}\label{s}
a_{1}-\binom{k-1}{1}a_{2}+\dotsm+(-1)^{k}\binom{k-1}{k-2}a_{k-1}=(-1)^{k}x_{k}
\end{equation}
and $x_{k}$ are coefficients of the expansion
\begin{equation*}
\ln \frac{f(n)g(n+1)}{g(n)f(n+1)}=\sum_{k=2}^{\infty }\frac{x_{k}}{n^{k}}.
\end{equation*}
Replacing $f(n)$ and $g(n)$ by $W_{n}$ and $\sqrt{\frac{e}{\pi}}\bigl(1-\frac{1}{2n}\bigr)^{n}\frac{1}{\sqrt{n}\,}$ respectively yields
\begin{equation*}
\ln \frac{f(n)g(n+1)}{g(n)f(n+1)}=\sum_{k=2}^{\infty }(-1)
^{k}\biggl[\frac{1+(-1)^{k}}{(k+1) 2^{k+1}}-\frac{1
}{k+1}+\frac{1}{2k}\biggr]\frac{1}{n^{k}}.
\end{equation*}
Hence, the system~\eqref{s} becomes
\begin{equation*}
a_{1}-\binom{k-1}{1}a_{2}+\dotsm+(-1)^{k}\binom{k-1}{k-2}a_{k-1}=\frac{1+(-1)^{k}}{(k+1) 2^{k+1}}-
\frac{1}{k+1}+\frac{1}{2k}
\end{equation*}
which has a solution
\begin{equation*}
a_{1}=0,\quad a_{2}=\frac{1}{24},\quad a_{3}=\frac{1}{48},\quad a_{4}=\frac{1}{160},\quad a_{5}=\frac{1}{960},\quad\dotsc.
\end{equation*}
The proof of Theorem~\ref{series-approxim-thm} is complete.
\end{proof}

\begin{thm}\label{Q-C-thm2.1}
For every integer $n\ge 1$, we have
\begin{equation}\label{Q-C-ineq2.1}
W_{n}>\sqrt{\frac{e}{\pi}}\,\biggl(1-\frac{1}{2n}\biggr)^{n}\frac{1}{\sqrt{n}\,}
\exp \biggl(\frac{1}{24n^{2}}+\frac{1}{48n^{3}}+\frac{1}{160n^{4}}+\frac{1}{960n^{5}}\biggr).
\end{equation}
\end{thm}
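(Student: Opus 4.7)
The plan is to reduce~\eqref{Q-C-ineq2.1} to the decay of an auxiliary sequence whose limit is $0$. Define
\[
y_n := \ln W_n - \frac{1}{2}\ln\frac{e}{\pi} - n\ln\!\left(1-\frac{1}{2n}\right) + \frac{1}{2}\ln n - \frac{1}{24n^{2}} - \frac{1}{48n^{3}} - \frac{1}{160n^{4}} - \frac{1}{960n^{5}},
\]
so that~\eqref{Q-C-ineq2.1} is equivalent to $y_n > 0$ for every $n\ge 1$. Theorem~\ref{series-approxim-thm} already yields $y_n \to 0$ as $n\to\infty$, so it suffices to prove that the sequence $\{y_n\}$ is strictly decreasing; then $y_n > \lim_{m\to\infty} y_m = 0$.

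The Wallis recursion $W_{n+1} = \frac{2n+1}{2n+2}\,W_n$ removes $W_n$ from the difference and gives
\[
y_n - y_{n+1} = \ln\frac{2n+2}{2n+1} - n\ln\!\left(1-\frac{1}{2n}\right) + (n+1)\ln\!\left(1-\frac{1}{2(n+1)}\right) + \frac{1}{2}\ln\frac{n}{n+1} - \sum_{k=2}^{5}a_k\!\left(\frac{1}{n^{k}}-\frac{1}{(n+1)^{k}}\right),
\]
with $a_2 = 1/24$, $a_3 = 1/48$, $a_4 = 1/160$, $a_5 = 1/960$. This is an entirely elementary function of $n$ that is well suited to analytic study.

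Introducing the real variable $x\ge 1$, set $F(x) := y_x - y_{x+1}$ and expand every logarithm in $F(x)$ into its convergent Taylor series in $1/x$. By the construction of $a_2,\dotsc,a_5$ via Theorem~\ref{series-approxim-thm}, the terms of order $x^{-2},x^{-3},x^{-4},x^{-5},x^{-6}$ all cancel, leaving
\[
F(x) = \frac{c_7}{x^{7}} + \frac{c_8}{x^{8}} + \dotsm,
\]
with leading coefficient $c_7 = 6a_6 > 0$, where $a_6 = 1/896$ is the next coefficient produced by the triangular system~\eqref{s}. Hence $F(x) > 0$ for all sufficiently large $x$, which handles the tail of the sequence.

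The main obstacle is to promote this asymptotic positivity to the \emph{uniform} statement $F(x) > 0$ on all of $[1,\infty)$. My strategy is to use the integral identity $\ln\frac{x+1}{x} = \int_{0}^{1}\frac{\td t}{x+t}$ to rewrite each logarithm appearing in $F(x)$ as an integral, combine the resulting integrands over a common denominator, and reduce positivity of $F(x)$ to verifying that an explicit polynomial in $x$ and $t$ is nonnegative on $[1,\infty)\times[0,1]$. An alternative, if the polynomial route proves unwieldy, is to differentiate $F$ several times and argue that an iterated derivative has a definite sign, so that positivity of $F$ telescopes down to checking a finite number of boundary values. Once $F(x) > 0$ has been established on $[1,\infty)$, the sequence $\{y_n\}$ is strictly decreasing, $y_n > 0$ holds for every $n\ge 1$, and~\eqref{Q-C-ineq2.1} is proved.
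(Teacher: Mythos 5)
Your reduction and your asymptotic analysis are sound: with $y_n$ equal to minus the paper's $\alpha_n$, the inequality is indeed equivalent to showing that $\{y_n\}$ decreases to $0$, the elimination of $W_n$ via $W_{n+1}=\frac{2n+1}{2n+2}W_n$ is correct, and the leading behaviour $F(x)=\frac{6a_6}{x^{7}}+O\bigl(\frac{1}{x^{8}}\bigr)$ with $a_6=\frac{1}{896}$ (which does come out of the triangular system~\eqref{s}) is right. But the proof stops exactly where the real work begins. Positivity of $F$ for ``sufficiently large'' $x$ proves nothing about $n=1,2,3,\dotsc$ up to an unspecified threshold, and you never make the threshold effective nor perform a finite verification; instead you offer two candidate strategies (an integral representation leading to a two-variable polynomial inequality, or sign control of an iterated derivative) without executing either. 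As written, the monotonicity of $\{y_n\}$ on all of $\mathbb{N}$ --- which is the entire content of the theorem beyond Theorem~\ref{series-approxim-thm} --- is not established, so the argument is a plan rather than a proof.

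For comparison, the paper carries out precisely the step you postpone, and by your second ``alternative'' route: it writes $\alpha_{n+1}-\alpha_n=s(n)$ for the explicit elementary function $s(x)$ (your $-F$ up to sign), computes
\begin{equation*}
s''(x)=\frac{C(x-1)}{32x^{7}(x+1)^{7}(2x+1)^{2}(2x-1)^{2}},
\end{equation*}
where $C$ is a degree-$9$ polynomial with positive coefficients, so that $s$ is strictly convex on $[1,\infty)$; combined with $\lim_{x\to\infty}s(x)=0$, convexity forces $s(x)>0$ for all $x\ge1$, hence $\alpha_n$ is increasing (your $y_n$ decreasing) on all of $\mathbb{N}$, not merely eventually. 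To complete your proposal you would need to supply an argument of this strength: either verify the sign of $F''$ (or of the polynomial arising from your integral identity) by an explicit computation valid on all of $[1,\infty)$, or make the $O(1/x^{8})$ remainder explicit enough to get a concrete threshold and then check the remaining finitely many $n$ by hand. Until one of these is done, there is a genuine gap.
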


\begin{proof}
It suffices to prove
\begin{equation*}
\alpha_{n}=n\ln \biggl(1-\frac{1}{2n}\biggr) -\frac{1}{2}\ln n-\ln \frac{(2n-1) !!}{(2n)!!}+\ln \sqrt{\frac{e}{\pi}}\,+h(n)<0,
\end{equation*}
where
\begin{equation*}
h(x) =\frac{1}{24x^{2}}+\frac{1}{48x^{3}}+\frac{1}{160x^{4}}+
\frac{1}{960x^{5}}.
\end{equation*}
Because $\alpha_{n}$ converges to $0$, it is sufficient to show that the sequence $\{\alpha_{n}:n\in\mathbb{N}\}$ is strictly increasing. It is not difficult to obtain $\alpha_{n+1}-\alpha_{n}=s(n)$, where
\begin{align*}
s(x)&=(x+1) \ln\biggl(1-\frac{1}{2x+2}\biggr) -x\ln\biggl(1-\frac{1}{2x}\biggr)\\
&\quad-\frac{1}{2}\ln\biggl(1+\frac{1}{x}\biggr)-\ln \frac{2x+1}{2x+2}+h(x+1) -h(x),\\
s''(x)& = \frac{C(x-1) }{32x^{7} (x+1)^{7}(2x+1)^{2}(2x-1)^{2}}\\
&>0,
\end{align*}
and
\begin{align*}
C(x) &=4913+33387x+98177x^{2}+164799x^{3}+174543 x^{4} \\
&\quad+121173x^{5}+55197x^{6}+15920x^{7}+2640x^{8}+192x^{9}.
\end{align*}
Accordingly, the function $s(x)$ is strictly convex on $[1,\infty)$. Combing this with the fact that $\lim_{x\to\infty}s(x)=0$ reveals that the function $s(x)$ on $[1,\infty)$, and so the sequence $\{s(n):n\in\mathbb{N}\}$, is positive. The proof of Theorem~\ref{Q-C-thm2.1} is complete.
\end{proof}

\section{A new approximation formula and a double inequality}

Finally we will find a new approximation formula and a double inequality for Wallis ratio $W_n$.

\begin{thm}\label{m-thm-best-approx}
As $n\to \infty$, we have
\begin{equation}\label{m}
W_{n}\sim \mu _{n}:=\sqrt{\frac{e}{\pi}}\,\biggl[1-\frac{1}{2(n+1/3)}\biggr]^{n+1/3} \frac{1}{\sqrt{n}\,}.
\end{equation}
\end{thm}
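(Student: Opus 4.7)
The plan is to embed $\mu_n$ into the one-parameter family
\begin{equation*}
\mu_n^{(b)} := \sqrt{\frac{e}{\pi}}\,\biggl[1-\frac{1}{2(n+b)}\biggr]^{n+b}\frac{1}{\sqrt{n}\,},\qquad b\in\mathbb{R},
\end{equation*}
so that $\mu_n=\mu_n^{(1/3)}$, and to isolate $b=1/3$ as the value for which $W_n/\mu_n^{(b)}\to 1$ fastest, in the spirit of Theorem~\ref{best-approx-thm1}. The asymptotic equivalence $W_n\sim\mu_n$ will then be the specialization at $b=1/3$. The reason to expect $1/3$ to be distinguished is that in this family the parameter sits inside the power factor rather than inside a square root, so a single scalar $b$ should be able to kill the first nontrivial term in the expansion of $\log(W_n/\mu_n^{(b)})$.

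Setting $z_n(b):=\log\bigl(W_n/\mu_n^{(b)}\bigr)$, I would first expand $(n+b)\log[1-1/(2(n+b))]$ in powers of $1/n$ by writing it as a series in $1/(n+b)$ via the standard Taylor expansion of $m\log(1-1/(2m))$, and then re-expanding each $1/(n+b)^{j}$ geometrically in powers of $1/n$. Combined with the asymptotic series for $\log W_n$ supplied by Theorem~\ref{series-approxim-thm}, this should yield an expansion of the shape
\begin{equation*}
z_n(b)=\biggl(\frac{1}{24}-\frac{b}{8}\biggr)\frac{1}{n^{2}}+O\!\biggl(\frac{1}{n^{3}}\biggr),
\end{equation*}
so that $z_n(b)\to 0$ for every $b$, while the leading coefficient vanishes precisely when $b=1/3$.

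The specialization $b=1/3$ then gives $z_n(1/3)\to 0$, which is exactly the claim $W_n\sim\mu_n$. The optimality of $b=1/3$ can additionally be recorded through Lemma~\ref{m1-Lemma1.1}: the telescoped difference $z_n(b)-z_{n+1}(b)$ inherits a leading term of the form $(1/12-b/4)/n^{3}$ from the expansion above, and the lemma with $k=3$ then pinpoints $b=1/3$ as the unique value for which $n^{2}z_n(b)\to 0$. The main obstacle is simply the bookkeeping in the double Taylor expansion used to identify the coefficient $1/24-b/8$; once that coefficient is correctly extracted, the distinguished value $b=1/3$ and the asymptotic relation $W_n\sim\mu_n$ drop out immediately.
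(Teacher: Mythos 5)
Your proposal is correct: with $g(m)=m\ln\bigl(1-\frac{1}{2m}\bigr)=-\frac12-\frac{1}{8m}-\frac{1}{24m^2}-\dotsb$ one indeed gets $g(n)-g(n+b)=-\frac{b}{8n^2}+O\bigl(n^{-3}\bigr)$, and adding the $\frac{1}{24n^2}$ term supplied by Theorem~\ref{series-approxim-thm} gives $z_n(b)=\bigl(\frac{1}{24}-\frac{b}{8}\bigr)\frac{1}{n^2}+O\bigl(n^{-3}\bigr)$, which vanishes at leading order exactly for $b=\frac13$; since $z_n(b)\to0$ for every $b$, the relation $W_n\sim\mu_n$ follows. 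However, your route is genuinely different from the paper's in two respects. First, the paper optimizes over the two-parameter family $\sqrt{e/\pi}\,\bigl[1-\frac{1}{2(n+b)}\bigr]^{n+c}n^{-1/2}$ and deduces $c=b$ from the $n^{-2}$ coefficient $\frac{c-b}{2}$ and then $b=\frac13$ from the $n^{-3}$ coefficient, so its best-approximation claim allows the base shift and the exponent shift to vary independently, whereas yours only treats the diagonal subfamily $c=b$ (enough for the stated theorem, weaker as an optimality statement). Second, the paper does not use Theorem~\ref{series-approxim-thm} at all: it expands the telescoped difference $\beta_n(b,c)-\beta_{n+1}(b,c)$, which is an explicit elementary function of $n$ because $\ln W_n-\ln W_{n+1}=\ln\frac{2n+2}{2n+1}$, and Lemma~\ref{m1-Lemma1.1} is then the essential tool; you instead expand $z_n(b)$ itself, which forces you to import the asymptotic series for $\ln W_n$ (legitimate, since Theorem~\ref{series-approxim-thm} precedes this result) and relegates the lemma to a cross-check. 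What your version buys is brevity and an explicit statement of the rate $n^2z_n(b)\to\frac{1-3b}{24}$; what the paper's version buys is self-containedness and the stronger two-parameter optimality. Your telescoped leading term $\bigl(\frac{1}{12}-\frac{b}{4}\bigr)n^{-3}$ is consistent with the paper's coefficient at $c=b$ (the paper's displayed limit $\frac{3b-1}{12}$ with the factor $n^2$ contains an evident sign/exponent typo).
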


\begin{proof}
Motivated by~\eqref{w1}, we now ask for the best approximation of the form
\begin{equation*}
W_{n}\sim \sqrt{\frac{e}{\pi}}\,\biggl[1-\frac{1}{2(n+b)}\biggr]
^{n+c}\frac{1}{\sqrt{n}\,},\quad n\to\infty ,
\end{equation*}
where $b$ and $c$ are real parameters. For this, let
\begin{equation*}
W_{n}=\sqrt{\frac{e}{\pi}}\,\biggl[1-\frac{1}{2(n+b)}\biggr]
^{n+c}\frac{1}{\sqrt{n}\,}\exp \beta_{n}(b,c).
\end{equation*}
Then an easy calculation leads to
\begin{multline*}
\beta_{n}(b,c)-\beta_{n+1}(b,c)=\frac{1}{2}(c-b) \frac{1}{n^{2}} +\biggl(b^{2}-bc-\frac{1}{4}c+\frac{1}{12}\biggr) \frac{1}{n^{3}} \\
+\biggl(\frac{1}{4}c-\frac{1}{8}b+\frac{3}{4}bc-\frac{3}{8}b^{2} -\frac{3}{2}b^{3}+\frac{3}{2}b^{2}c-\frac{1}{16}\biggr) \frac{1}{n^{4}}+O\biggl(\frac{1}{n^{5}}\biggr).
\end{multline*}
This implies that
\begin{equation*}
\lim_{n\to\infty}\bigl\{n^2[\beta_{n}(b,c)-\beta_{n+1}(b,c)]\bigr\}=\frac{c-b}2
\end{equation*}
and
\begin{equation*}
\lim_{n\to\infty}\bigl\{n^2[\beta_{n}(b,b)-\beta_{n+1}(b,b)]\bigr\}=\frac{3b-1}{12}.
\end{equation*}
By Lemma~\ref{m1-Lemma1.1}, it follows that the sequence $\{\beta_{n}(b,c):n\in\mathbb{N}\}$ converges fastest only when $b=c=\frac13$. The proof of Theorem~\ref{m-thm-best-approx} is complete.
\end{proof}

\begin{rem}
We note that the approximation formula~\eqref{m} is the most accurate possible among a class of approximation formulas mentioned above. The numerical computation in Table~\ref{table-numer-1} shows the superiority of~\eqref{m} over~\eqref{w}.
\begin{table}[hbtp]
\caption{Numerical computation}
\begin{tabular}{|c|c|c|}
\hline
$n$ & $W_{n}-\chi _{n}$ & $W_{n}-\mu _{n}$ \\ \hline
$50$ & $8. 0124\times 10^{-4}$ & $4.
4198\times 10^{-9}$ \\ \hline
$100$ & $2. 8269\times 10^{-4}$ & $3.
9124\times 10^{-10}$ \\ \hline
$250$ & $7. 1425\times 10^{-5}$ & $1.
5850\times 10^{-11}$ \\ \hline
$1000$ & $8. 9225\times 10^{-6}$ & $1.
2388\times 10^{-13}$ \\ \hline
\end{tabular}
\label{table-numer-1}
\end{table}
\end{rem}

\begin{thm}\label{Q-C-thm1.1}
For every integer $n\ge 1,$ we have
\begin{multline}
\sqrt{\frac{e}{\pi}}\,\biggl[1-\frac1{2(n+1/3)}\biggr]
^{n+1/3}\frac{1}{\sqrt{n}\,}<W_{n}\\
<\sqrt{\frac{e}{\pi}}\, \biggl[1-\frac1{2(n+1/3)}\biggr]^{n+1/3}\frac{1}{\sqrt{n}\,}
\exp\biggl(\frac{1}{144n^{3}}\biggr).
\end{multline}
\end{thm}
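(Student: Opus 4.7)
The plan is to adapt the strategy from the proof of Theorem~\ref{Q-C-thm2.1} so that it delivers both the upper and the lower estimates simultaneously. For $n\ge 1$, define
\[
\gamma_n = \ln W_n - \ln \sqrt{\tfrac{e}{\pi}} - \Bigl(n+\tfrac{1}{3}\Bigr)\ln\Bigl[1 - \tfrac{1}{2(n+1/3)}\Bigr] + \tfrac{1}{2}\ln n,
\]
so that the claimed double inequality is equivalent to
\[
0 < \gamma_n < \frac{1}{144\,n^{3}}, \qquad n\ge 1.
\]
Either directly or from the expansion produced in the proof of Theorem~\ref{m-thm-best-approx} specialised to $b=c=\tfrac13$, one checks that $\gamma_n\to 0$ and $\gamma_n-\gamma_{n+1}\sim \tfrac{1}{48\,n^{4}}$; via Lemma~\ref{m1-Lemma1.1} this is already consistent with the expected leading behaviour $\gamma_n\sim \tfrac{1}{144\,n^{3}}$, and singles out $1/(144n^{3})$ as the correct upper bound.

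Since $\gamma_n\to 0$, the lower inequality reduces to showing that $\{\gamma_n\}$ is strictly decreasing, and the upper inequality to showing that $\bigl\{\gamma_n-\tfrac{1}{144\,n^{3}}\bigr\}$ is strictly increasing. To the first claim I would attach the real-variable function $s(x)=\gamma_{x+1}-\gamma_x$, obtained by using $\ln(W_{x+1}/W_x)=\ln\tfrac{2x+1}{2x+2}$ so as to extend everything to $x\in[1,\infty)$; to the second, the companion function $\tilde s(x)=s(x)+\tfrac{1}{144\,x^{3}}-\tfrac{1}{144\,(x+1)^{3}}$. Following the template of Theorem~\ref{Q-C-thm2.1}, I would compute $s''(x)$ and $\tilde s''(x)$, present each as a single quotient $\pm P(x)/Q(x)$ with $Q(x)>0$ and $P(x)$ a polynomial whose coefficients can be read off as all of one sign, and thereby establish that $s$ is strictly concave and $\tilde s$ is strictly convex on $[1,\infty)$. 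The standard fact that a strictly concave (respectively, strictly convex) function on $[1,\infty)$ with limit $0$ at $+\infty$ is everywhere negative (respectively, positive)---since its derivative must then tend to $0$ monotonically from above (respectively, below)---then gives $s(x)<0$ and $\tilde s(x)>0$ on $[1,\infty)$, which is exactly the monotonicity required.

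The principal obstacle is the concrete polynomial identity at the heart of each convexity check. Because the shift by $\tfrac13$ produces denominators involving $2n+\tfrac{2}{3}$, $2n+\tfrac{8}{3}$ and so on, and because $\tilde s$ carries the extra terms $\tfrac{1}{144\,x^{3}}-\tfrac{1}{144\,(x+1)^{3}}$, the polynomials arising in $s''(x)$ and $\tilde s''(x)$ will be appreciably larger than the degree-$9$ polynomial $C(x)$ of the proof of Theorem~\ref{Q-C-thm2.1}; verifying that all of their coefficients carry the desired sign is a mechanical but unavoidable computer-algebra step. Every other ingredient---the asymptotic check at infinity, the reduction to monotonicity, the extension to a real variable, and the convexity argument---is a direct reprise of tools already developed earlier in the paper.
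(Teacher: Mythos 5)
Your plan is correct and is essentially the paper's own proof up to a sign convention: your $\gamma_n$, $s$, and $\tilde{s}$ are the negatives of the paper's $b_n$, $p$, and $q$, and the paper likewise reduces the two bounds to monotonicity of these sequences, proves $p$ strictly convex and $q$ strictly concave by writing $p''$ and $q''$ as single quotients whose numerator polynomials (of degrees $6$ and $11$, evaluated at $x-1$) have all positive coefficients, and then concludes from the limits $0$ at infinity. The only step you leave open---the explicit second-derivative quotients and the coefficient sign check---is precisely the computational content the paper supplies.
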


\begin{proof}
It is sufficient to prove
\begin{equation*}
b_{n}=\biggl(n+\frac{1}{3}\biggr) \ln\biggl(1-\frac{1}{2(n+1/3)}\biggr) -\frac{1}{2}\ln n-\ln \frac{(2n-1)!!}{(
2n) !!}+\ln \sqrt{\frac{e}{\pi}}\,<0
\end{equation*}
and
\begin{equation*}
c_{n}=b_{n}+\frac{1}{144n^{3}}>0.
\end{equation*}
Because $b_{n}$ and $c_{n}$ converge to $0$, it suffices to show that $b_{n}$ is strictly increasing and $c_{n}$ is strictly decreasing. For this, we discuss the differences $b_{n+1}-b_{n}=p(n)$ and $c_{n+1}-c_{n}=q(n)$, where
\begin{align*}
p(x)&=\biggl(x+\frac{4}{3}\biggr) \ln\biggl(1-\frac{1}{2(x+4/3)}\biggr) -\biggl(x+\frac{1}{3}\biggr) \ln\biggl(1-\frac{1}{2(x+1/3)}\biggr) \\
&\quad-\frac{1}{2}\ln \biggl(1+\frac{1}{x}\biggr) -\ln \frac{2x+1}{2x+2}
\end{align*}
and
\begin{equation*}
q(x) =p(x) +\frac{1}{144(x+1)^{3}}-
\frac{1}{144x^{3}}.
\end{equation*}
Since
\begin{equation*}
p''(x) =\frac{A(x-1) }{2x^{2}(
3x+1) (3x+4) (x+1)^{2}(2x+1)
^{2}(6x-1)^{2}(6x+5)^{2}}>0
\end{equation*}
and
\begin{equation*}
q''(x) =-\frac{B(x-1) }{12x^{5}(
3x+1) (3x+4) (2x+1)^{2}(6x-1)
^{2}(x+1)^{5}(6x+5)^{2}}<0,
\end{equation*}
where
\begin{align*}
A(x)&=351068+1516131x+2684091x^{2}+2495340x^{3}\\
&\quad+1285956x^{4}+348624x^{5}+38880x^{6}
\end{align*}
and
\begin{align*}
B(x)
&=6780036+50421819x+166596550x^{2}+322415601x^{3}\\
&\quad+405307306x^{4} +346439295x^{5}+204449525x^{6}+82629900x^{7}\\
&\quad+22094730x^{8} +3618864 x^{9}+305208x^{10}+7776x^{11},
\end{align*}
it follows that $p(x)$ is strictly convex and $q(x)$ is strictly concave on $[1,\infty)$. As a result, considering the fact that $\lim_{x\to\infty}p(x)=\lim_{x\to\infty}q(x)=0$, we derive that $p(x)>0$ and $q(x)<0$ on $[1,\infty)$. Consequently, the sequences $\{p(n):n\in\mathbb{N}\}$ and $\{q(n):n\in\mathbb{N}\}$ are positive. The proof of Theorem~\ref{Q-C-thm1.1} is complete.
\end{proof}

\subsection*{Acknowledgements}
The work of the second author was supported in part by the Romanian National Authority for Scientific Research, CNCS-UEFISCDI, under Grant No. PN-II-ID-PCE-2011-3-0087.

\end{document}